\newtheorem{theorem}{Theorem}
\theoremstyle{plain}
\newtheorem{corollary}{Corollary}
\newtheorem{criterion}{Criterion}
\newtheorem{lemma}{Lemma}
\newtheorem{proposition}{Proposition}
\newtheorem{remark}{Remark}
\numberwithin{equation}{section}
\begin{document}
\title[inequalities for s-convex functions]{\textbf{Some Hadamard and
Simpson type integral inequalities via }$\mathbf{s}$\textbf{-convexity and
their applications}}
\author{Mevl\"{u}t TUN\c{C}}
\address{University of Kilis 7 Aral\i k, Department of Mathematics, Turkey}
\email{mevluttunc@kilis.edu.tr}
\date{}
\subjclass[2000]{Primary 26D07, 26D15}
\keywords{$s$-convex functions, Hadamard inequality, Simpson inequality}

\begin{abstract}
In this study, we establish and generalize some inequalities of Hadamard and
Simpson type based on $s$-convexity in the second sense. Some applications
to special means of positive real numbers are also given and generalized.
Examples are given to show the results. The results generalize the integral
inequalities in \cite{sari2} and \cite{xi}.
\end{abstract}

\maketitle

\section{$\mathbf{Introductions}$}

To establish analytic inequalities, one of the most efficient way is the
property of convexity of a dedicated function. Notedly, in the theory of
higher transcendental functions, there are many significant applications. We
can use the integral inequalities in order to study qualitative and
quantitative properties of integrals (see \cite{mit,nic,wu}). Thing
continuing to bewilder us by indicating new inferences, new difficulties and
also new open questions are a major mathematical outcome.

\bigskip \textbf{The Hermite-Hadamard inequality:} Let $f:I\subseteq 
\mathbb{R}
\rightarrow 
\mathbb{R}
$ be a convex function and $u,v\in I$ with $u<v$. The following double
inequality:%
\begin{equation}
f\left( \frac{u+v}{2}\right) \leq \frac{1}{v-u}\int_{u}^{v}f\left( x\right)
dx\leq \frac{f\left( u\right) +f\left( v\right) }{2}  \tag{HH}
\end{equation}%
is known in the literature as Hadamard's inequality (or Hermite-Hadamard
inequality) for convex functions. Keep in mind that some of the classical
inequalities for means can come from (HH) for convenient particular
selections of the function $f.$ If $f$ is concave, this double inequality
hold in the inversed way. See \cite{bes,nic} for details.

\textbf{The Simpson inequality:} The following inequality is well known in
the literature as Simpson's inequality; 
\begin{equation}
\left\vert \frac{1}{3}\left[ \frac{f\left( u\right) +f\left( v\right) }{2}%
+2f\left( \frac{u+v}{2}\right) \right] -\frac{1}{v-u}\int_{u}^{v}f\left(
x\right) dx\right\vert \leq \frac{1}{1280}\left\Vert f^{\left( 4\right)
}\right\Vert _{\infty }\left( v-u\right) ^{4},  \tag{S}
\end{equation}%
where the mapping $f:\left[ u,v\right] \rightarrow 
\mathbb{R}
$ is assumed to be four times continuously differentiable on the interval
and $f^{\left( 4\right) }$ to be bounded on $\left( u,v\right) $, that is, $%
\left\Vert f^{\left( 4\right) }\right\Vert _{\infty }=\sup_{t\in \left(
u,v\right) }\left\vert f^{\left( 4\right) }\left( t\right) \right\vert
<\infty $. See \cite{alo,ss,sari,sari2} for details.

In \cite{hud}, Hudzik and Maligranda considered among others the class of
functions which are $s$-convex in the second sense. This class is defined in
the following way: a function $f:%
\mathbb{R}
_{+}\rightarrow 
\mathbb{R}
$, where $%
\mathbb{R}
_{+}=\left[ 0,\infty \right) $, is said to be $s$-convex in the second sense
if%
\begin{equation*}
f\left( \alpha \lambda +\left( 1-\alpha \right) \mu \right) \leq \alpha
^{s}f\left( \lambda \right) +\left( 1-\alpha \right) ^{s}f\left( \mu \right)
\end{equation*}%
for all $\lambda ,\mu \in \left[ 0,\infty \right) ,$ $\alpha \in \left[ 0,1%
\right] $ and for some fixed $s\in \left( 0,1\right] $. This class of $s$%
-convex functions is usually denoted by $K_{s}^{2}$. It can be smoothly seen
that for $s=1$, $s$-convexity reduces to the ordinary convexity of functions
defined on $\left[ 0,\infty \right) $.

Recently, in \cite{sari, sari2}, Sarikaya \textit{et al.} presented the
important integral identity including the first-order derivative of $f$ to
establish many interesting Simpson-type inequalities for convex and $s$%
-convex functions.

Meanwhile, in \cite{xi}, Xi \textit{et al.} presented the following two
important integral identities including the first-order derivatives to
establish many interesting Hermite-Hadamard-type inequalities for convex
functions.

In this study, some new Hadamard and Simpson type integral inequalities for
differentiable functions are established, and are applied to produce some
inequalities of special means. Examples are given to show the results. The
results generalize the integral inequalities in \cite{sari2} and \cite{xi}.

\section{\textbf{Main results for }$s$\textbf{-convex functions}}

In order to demonstrate our main results, we need the following lemmas that
have been derived in \cite{xi}:

\begin{lemma}
\cite{xi}\label{l1}Let $f:I\subseteq 
\mathbb{R}
\rightarrow 
\mathbb{R}
$ be differentiable function on $I^{\circ }$, $u,v\in I,$ with $u<v.$ If $%
f^{\prime }\in L\left[ u,v\right] $ and $\lambda ,\mu \in 
\mathbb{R}
$ then%
\begin{eqnarray}
&&  \label{x} \\
&&\frac{\lambda f\left( u\right) +\mu f\left( v\right) }{2}+\frac{2-\lambda
-\mu }{2}f\left( \frac{u+v}{2}\right) -\frac{1}{v-u}\int_{u}^{v}f\left(
x\right) dx  \notag \\
&=&\frac{v-u}{4}\int_{0}^{1}\left[ \left( 1-\lambda -\alpha \right)
f^{\prime }\left( \alpha u+\left( 1-t\right) \frac{u+v}{2}\right) +\left(
\mu -\alpha \right) f^{\prime }\left( \alpha \frac{u+v}{2}+\left( 1-\alpha
\right) v\right) \right] d\alpha  \notag
\end{eqnarray}
\end{lemma}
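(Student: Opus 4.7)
The plan is to verify the identity by working on the right-hand side directly, applying integration by parts to each of the two integrals and then combining. Since $f'$ appears linearly and the arguments $\alpha u + (1-\alpha)\tfrac{u+v}{2}$ and $\alpha\tfrac{u+v}{2} + (1-\alpha)v$ are affine in $\alpha$, the chain rule expresses $f'$ at those points as a constant multiple of a derivative in $\alpha$ of the corresponding composite function, making integration by parts the natural tool. Specifically, for the first integrand I observe that $\tfrac{d}{d\alpha} f\!\left(\alpha u + (1-\alpha)\tfrac{u+v}{2}\right) = \tfrac{u-v}{2}\, f'\!\left(\alpha u + (1-\alpha)\tfrac{u+v}{2}\right)$, and analogously for the second.

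The steps I would carry out are: (i) rewrite the first integral as $\tfrac{2}{u-v}\int_0^1 (1-\lambda-\alpha)\,\tfrac{d}{d\alpha}f\!\left(\alpha u + (1-\alpha)\tfrac{u+v}{2}\right)\,d\alpha$ and integrate by parts, producing boundary terms $-\lambda f(u) - (1-\lambda)f\!\left(\tfrac{u+v}{2}\right)$ together with an integral of $f$ itself; (ii) evaluate the leftover $\int_0^1 f\!\left(\alpha u + (1-\alpha)\tfrac{u+v}{2}\right) d\alpha$ by the substitution $x = \alpha u + (1-\alpha)\tfrac{u+v}{2}$, turning it into $\tfrac{2}{v-u}\int_u^{(u+v)/2} f(x)\,dx$; (iii) repeat the same two moves on the second integral, getting boundary terms $(\mu-1)f\!\left(\tfrac{u+v}{2}\right) - \mu f(v)$ and an integral $\tfrac{2}{v-u}\int_{(u+v)/2}^v f(x)\,dx$; (iv) add the two contributions, notice that the two half-range integrals of $f$ fuse into $\int_u^v f(x)\,dx$ and that the $f\!\left(\tfrac{u+v}{2}\right)$ boundary pieces combine to $(2-\lambda-\mu)f\!\left(\tfrac{u+v}{2}\right)$; (v) multiply through by $\tfrac{v-u}{4}$ to obtain the claimed left-hand side.

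The only real difficulty is careful sign and constant tracking: one must keep straight the factor $\tfrac{u-v}{2}$ (negative) produced by the chain rule, invert it to $\tfrac{2}{u-v}$ when moving it out of the integral, then reinvert once more in the substitution step where the limits flip, and finally combine everything with the external prefactor $\tfrac{v-u}{4}$. Nothing deeper than the fundamental theorem of calculus, integration by parts, and a linear change of variables is needed; the absolute continuity of $f$ that comes with $f' \in L[u,v]$ is enough to justify these manipulations.
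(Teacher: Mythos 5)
Your proof is correct: the integration-by-parts computation, the boundary terms $-\lambda f(u)-(1-\lambda)f\bigl(\tfrac{u+v}{2}\bigr)$ and $(\mu-1)f\bigl(\tfrac{u+v}{2}\bigr)-\mu f(v)$, the change of variables producing the two half-range integrals, and the final bookkeeping with the prefactor $\tfrac{v-u}{4}$ all check out. The paper itself gives no proof of this lemma (it is quoted from the cited reference of Xi and Qi), and your argument is exactly the standard one used there, so nothing further is needed.
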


\begin{lemma}
\cite{xi}\label{l2}For $x>0$ and $0\leq y\leq 1$, one has%
\begin{eqnarray}
\int_{0}^{1}\left\vert y-\alpha \right\vert ^{x}d\alpha &=&\frac{%
y^{x+1}+\left( 1-y\right) ^{x+1}}{x+1},  \label{y} \\
\int_{0}^{1}\alpha \left\vert y-\alpha \right\vert ^{x}d\alpha &=&\frac{%
y^{x+2}+\left( x+1+y\right) \left( 1-y\right) ^{x+1}}{\left( x+1\right)
\left( x+2\right) }.  \notag
\end{eqnarray}
\end{lemma}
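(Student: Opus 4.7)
The plan is to split each integral at the point $\alpha=y$, where the kernel $|y-\alpha|$ changes sign, so the absolute value disappears on each piece, and then to evaluate the resulting integrals by an elementary linear substitution.

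For the first identity I would write
$$\int_{0}^{1}|y-\alpha|^{x}\,d\alpha=\int_{0}^{y}(y-\alpha)^{x}\,d\alpha+\int_{y}^{1}(\alpha-y)^{x}\,d\alpha,$$
and apply the substitutions $t=y-\alpha$ on the first piece and $t=\alpha-y$ on the second piece. Each piece then reduces to $\int_{0}^{a}t^{x}\,dt=a^{x+1}/(x+1)$, with $a=y$ and $a=1-y$ respectively, and summing gives the stated value $\bigl(y^{x+1}+(1-y)^{x+1}\bigr)/(x+1)$.

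For the second identity the same splitting yields
$$\int_{0}^{1}\alpha|y-\alpha|^{x}\,d\alpha=\int_{0}^{y}\alpha(y-\alpha)^{x}\,d\alpha+\int_{y}^{1}\alpha(\alpha-y)^{x}\,d\alpha.$$
Using the same substitutions, the factor $\alpha$ becomes $y-t$ on the first piece and $t+y$ on the second, so each integrand is a linear combination of $t^{x}$ and $t^{x+1}$ that integrates explicitly. The first piece evaluates to $y\cdot\frac{y^{x+1}}{x+1}-\frac{y^{x+2}}{x+2}$, which collapses to $\frac{y^{x+2}}{(x+1)(x+2)}$ via the identity $\frac{1}{x+1}-\frac{1}{x+2}=\frac{1}{(x+1)(x+2)}$. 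The second piece evaluates to $\frac{(1-y)^{x+2}}{x+2}+\frac{y(1-y)^{x+1}}{x+1}$.

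The only nontrivial bit of bookkeeping is to combine the two contributions over the common denominator $(x+1)(x+2)$ and to factor $(1-y)^{x+1}$ out of the $[y,1]$ terms, using $(x+1)(1-y)+y(x+2)=x+1+y$ to produce the claimed numerator $y^{x+2}+(x+1+y)(1-y)^{x+1}$. There is no real obstacle: the argument is routine antidifferentiation together with this one algebraic simplification. The edge cases $y=0$ and $y=1$, in which one of the two pieces degenerates to a zero integral, cause no problem since $x>0$ forces $0^{x+1}=0$.
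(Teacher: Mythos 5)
Your proof is correct: splitting at $\alpha=y$, substituting, and recombining over the common denominator $(x+1)(x+2)$ with the identity $(x+1)(1-y)+y(x+2)=x+1+y$ yields exactly the stated formulas, and your handling of the degenerate endpoints is fine. The paper itself gives no proof of this lemma (it is quoted from the cited reference of Xi and Qi), and your argument is the standard, expected one, so there is nothing to reconcile.
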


\begin{theorem}
\label{t0}Let $f:I\subseteq 
\mathbb{R}
\rightarrow 
\mathbb{R}
$ be differentiable function on $I^{\circ }$, $u,v\in I,$ $0\leq \lambda ,$ $%
\mu \leq 1,$ and $f^{\prime }\in L\left[ u,v\right] .$ If $\left\vert
f^{\prime }\left( x\right) \right\vert ^{r}$ is $s$-convex function in the
second sense on $\left[ u,v\right] $ for some fixed $s\in \left( 0,1\right]
, $ $p,r>1,$ $1/p+1/r=1$, then%
\begin{eqnarray}
&&  \label{xx} \\
&&\left\vert \frac{\lambda f\left( u\right) +\mu f\left( v\right) }{2}+\frac{%
2-\lambda -\mu }{2}f\left( \frac{u+v}{2}\right) -\frac{1}{v-u}%
\int_{u}^{v}f\left( x\right) dx\right\vert  \notag \\
&\leq &\frac{v-u}{4}\left( \frac{\left( 1-\lambda \right) ^{p+1}+\lambda
^{p+1}}{p+1}\right) ^{\frac{1}{p}}\left( \frac{2^{s+1}-1}{2^{s}\left(
s+1\right) }\left\vert f^{\prime }\left( u\right) \right\vert ^{r}+\frac{1}{%
2^{s}\left( s+1\right) }\left\vert f^{\prime }\left( v\right) \right\vert
^{r}\right) ^{\frac{1}{r}}  \notag \\
&&+\frac{v-u}{4}\left( \frac{\mu ^{p+1}+\left( 1-\mu \right) ^{p+1}}{p+1}%
\right) ^{\frac{1}{p}}\left( \frac{1}{2^{s}\left( s+1\right) }\left\vert
f^{\prime }\left( u\right) \right\vert ^{r}+\frac{2^{s+1}-1}{2^{s}\left(
s+1\right) }\left\vert f^{\prime }\left( v\right) \right\vert ^{r}\right) ^{%
\frac{1}{r}}.  \notag
\end{eqnarray}
\end{theorem}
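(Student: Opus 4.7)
The plan is to start from the integral identity (\ref{x}) in Lemma \ref{l1}, take absolute values, and apply the triangle inequality so that the left-hand side of (\ref{xx}) is majorized by $\frac{v-u}{4}$ times a sum of two integrals, one with weight $|1-\lambda-\alpha|$ against $|f'(\alpha u+(1-\alpha)\tfrac{u+v}{2})|$ and one with weight $|\mu-\alpha|$ against $|f'(\alpha\tfrac{u+v}{2}+(1-\alpha)v)|$.

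Next, I would apply H\"older's inequality with conjugate exponents $p,r$ to each of these integrals, separating the polynomial weight $|c-\alpha|^p$ from the $r$th power of $|f'(\cdot)|$. The two weight integrals are handled by the first identity of Lemma \ref{l2}: taking $x=p$ and $y=1-\lambda\in[0,1]$ for the first integral gives $\frac{(1-\lambda)^{p+1}+\lambda^{p+1}}{p+1}$, while taking $y=\mu\in[0,1]$ for the second gives $\frac{\mu^{p+1}+(1-\mu)^{p+1}}{p+1}$. These produce exactly the $(\cdot)^{1/p}$ factors on the right-hand side of (\ref{xx}).

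The key algebraic step is to recognize that the arguments of $f'$ are convex combinations of $u$ and $v$, namely
\begin{equation*}
\alpha u+(1-\alpha)\tfrac{u+v}{2}=\tfrac{1+\alpha}{2}\,u+\tfrac{1-\alpha}{2}\,v,\qquad \alpha\tfrac{u+v}{2}+(1-\alpha)v=\tfrac{\alpha}{2}\,u+\tfrac{2-\alpha}{2}\,v.
\end{equation*}
Applying $s$-convexity of $|f'|^r$ to each, the integrand on the first piece is pointwise bounded by $\bigl(\tfrac{1+\alpha}{2}\bigr)^{s}|f'(u)|^{r}+\bigl(\tfrac{1-\alpha}{2}\bigr)^{s}|f'(v)|^{r}$, and on the second piece by $\bigl(\tfrac{\alpha}{2}\bigr)^{s}|f'(u)|^{r}+\bigl(\tfrac{2-\alpha}{2}\bigr)^{s}|f'(v)|^{r}$. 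Elementary power integrations over $[0,1]$ give $\int_0^1(\tfrac{1+\alpha}{2})^{s}d\alpha=\int_0^1(\tfrac{2-\alpha}{2})^{s}d\alpha=\frac{2^{s+1}-1}{2^{s}(s+1)}$ and $\int_0^1(\tfrac{1-\alpha}{2})^{s}d\alpha=\int_0^1(\tfrac{\alpha}{2})^{s}d\alpha=\frac{1}{2^{s}(s+1)}$, which are exactly the coefficients appearing inside the $(\cdot)^{1/r}$ brackets of (\ref{xx}).

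The main obstacle is really only bookkeeping: one must match the correct $y$-value in Lemma \ref{l2} to each weight, and verify that the rewriting of the convex combinations produces the asymmetric pair $\{\tfrac{1+\alpha}{2},\tfrac{1-\alpha}{2}\}$ on the left half-interval and $\{\tfrac{\alpha}{2},\tfrac{2-\alpha}{2}\}$ on the right half, so that the $2^{s+1}-1$ numerator is attached to $|f'(u)|^{r}$ in the first term and to $|f'(v)|^{r}$ in the second. Once these identifications are set, substituting the H\"older bounds and the $s$-convexity bounds into the triangle-inequality estimate reproduces (\ref{xx}) term by term.
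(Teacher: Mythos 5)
Your proposal is correct and follows essentially the same route as the paper's own proof: Lemma \ref{l1} plus the triangle inequality, H\"older's inequality with exponents $p,r$, evaluation of the weight integrals via the first identity of Lemma \ref{l2}, and the $s$-convexity bound after rewriting the arguments of $f'$ as the convex combinations $\frac{1+\alpha}{2}u+\frac{1-\alpha}{2}v$ and $\frac{\alpha}{2}u+\frac{2-\alpha}{2}v$. All the coefficient computations you indicate match those in the paper.
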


\begin{proof}
Assume that $p>1,$ by Lemma \ref{l1} and using the well known H\"{o}lder
inequality, we have 
\begin{eqnarray*}
&&\left\vert \frac{\lambda f\left( u\right) +\mu f\left( v\right) }{2}+\frac{%
2-\lambda -\mu }{2}f\left( \frac{u+v}{2}\right) -\frac{1}{v-u}%
\int_{u}^{v}f\left( x\right) dx\right\vert \\
&\leq &\frac{v-u}{4}\left[ \int_{0}^{1}\left\vert 1-\lambda -\alpha
\right\vert \left\vert f^{\prime }\left( \alpha u+\left( 1-\alpha \right) 
\frac{u+v}{2}\right) \right\vert d\alpha +\int_{0}^{1}\left\vert \mu -\alpha
\right\vert \left\vert f^{\prime }\left( \alpha \frac{u+v}{2}+\left(
1-\alpha \right) v\right) \right\vert d\alpha \right] \\
&\leq &\frac{v-u}{4}\left( \int_{0}^{1}\left\vert 1-\lambda -\alpha
\right\vert ^{p}dt\right) ^{\frac{1}{p}}\left( \int_{0}^{1}\left\vert
f^{\prime }\left( \alpha u+\left( 1-\alpha \right) \frac{u+v}{2}\right)
\right\vert ^{r}d\alpha \right) ^{\frac{1}{r}} \\
&&+\frac{v-u}{4}\left( \int_{0}^{1}\left\vert \mu -\alpha \right\vert
^{p}d\alpha \right) ^{\frac{1}{p}}\left( \int_{0}^{1}\left\vert f^{\prime
}\left( \alpha \frac{u+v}{2}+\left( 1-\alpha \right) v\right) \right\vert
^{r}d\alpha \right) ^{\frac{1}{r}}.
\end{eqnarray*}%
Since $\left\vert f^{\prime }\left( x\right) \right\vert ^{r}$ is $s$-convex
in the second sense on $\left[ u,v\right] ,$ then we get%
\begin{eqnarray*}
\int_{0}^{1}\left\vert f^{\prime }\left( \alpha u+\left( 1-\alpha \right) 
\frac{u+v}{2}\right) \right\vert ^{r}d\alpha &\leq &\int_{0}^{1}\left(
\left( \frac{1+\alpha }{2}\right) ^{s}\left\vert f^{\prime }\left( u\right)
\right\vert ^{r}+\left( \frac{1-\alpha }{2}\right) ^{s}\left\vert f^{\prime
}\left( v\right) \right\vert ^{r}\right) d\alpha \\
&=&\frac{2^{s+1}-1}{2^{s}\left( s+1\right) }\left\vert f^{\prime }\left(
u\right) \right\vert ^{r}+\frac{1}{2^{s}\left( s+1\right) }\left\vert
f^{\prime }\left( v\right) \right\vert ^{r}
\end{eqnarray*}%
and%
\begin{eqnarray*}
\int_{0}^{1}\left\vert f^{\prime }\left( \alpha \frac{u+v}{2}+\left(
1-\alpha \right) v\right) \right\vert ^{r}d\alpha &\leq &\int_{0}^{1}\left(
\left( \frac{\alpha }{2}\right) ^{s}\left\vert f^{\prime }\left( u\right)
\right\vert ^{r}+\left( \frac{2-\alpha }{2}\right) ^{s}\left\vert f^{\prime
}\left( v\right) \right\vert ^{r}\right) d\alpha \\
&=&\frac{1}{2^{s}\left( s+1\right) }\left\vert f^{\prime }\left( u\right)
\right\vert ^{r}+\frac{2^{s+1}-1}{2^{s}\left( s+1\right) }\left\vert
f^{\prime }\left( v\right) \right\vert ^{r}
\end{eqnarray*}%
where we have used the fact that%
\begin{equation*}
\int_{0}^{1}\left\vert 1-\lambda -\alpha \right\vert ^{p}d\alpha =\frac{%
\left( 1-\lambda \right) ^{p+1}+\lambda ^{p+1}}{p+1}\text{ and }%
\int_{0}^{1}\left\vert \mu -\alpha \right\vert ^{p}d\alpha =\frac{\mu
^{p+1}+\left( 1-\mu \right) ^{p+1}}{p+1}.
\end{equation*}%
This completes the proof.
\end{proof}

If taking $\lambda =%
\mu
$ in Theorem \ref{t0}, we derive the following corollary.

\begin{corollary}
Let $f:I\subseteq 
\mathbb{R}
\rightarrow 
\mathbb{R}
$ be differentiable function on $I^{\circ }$, $u,v\in I,$ $0\leq \lambda
\leq 1,$ and $f^{\prime }\in L\left[ u,v\right] .$ If $\left\vert f^{\prime
}\left( x\right) \right\vert ^{r}$ is $s$-convex function in the second
sense on $\left[ u,v\right] $ for some fixed $s\in \left( 0,1\right] ,$ and $%
1/p+1/r=1$, then%
\begin{eqnarray}
&& \\
&&\left\vert \lambda \frac{f\left( u\right) +f\left( v\right) }{2}+\left(
1-\lambda \right) f\left( \frac{u+v}{2}\right) -\frac{1}{v-u}%
\int_{u}^{v}f\left( x\right) dx\right\vert  \notag \\
&\leq &\frac{v-u}{4}\left( \frac{\left( 1-\lambda \right) ^{p+1}+\lambda
^{p+1}}{p+1}\right) ^{\frac{1}{p}}\times \left\{ \left( \frac{2^{s+1}-1}{%
2^{s}\left( s+1\right) }\left\vert f^{\prime }\left( u\right) \right\vert
^{r}+\frac{1}{2^{s}\left( s+1\right) }\left\vert f^{\prime }\left( v\right)
\right\vert ^{r}\right) ^{\frac{1}{r}}\right.  \notag \\
&&\left. +\left( \frac{1}{2^{s}\left( s+1\right) }\left\vert f^{\prime
}\left( u\right) \right\vert ^{r}+\frac{2^{s+1}-1}{2^{s}\left( s+1\right) }%
\left\vert f^{\prime }\left( v\right) \right\vert ^{r}\right) ^{\frac{1}{r}%
}\right\}  \notag
\end{eqnarray}
\end{corollary}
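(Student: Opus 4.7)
The plan is to obtain this corollary as an immediate specialization of Theorem \ref{t0} by substituting $\mu = \lambda$, so no new analytic machinery is needed. The entire argument reduces to bookkeeping of the two sides of inequality (\ref{xx}) under this substitution.

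First I would rewrite the left-hand side of (\ref{xx}) with $\mu=\lambda$. The term $\frac{\lambda f(u)+\mu f(v)}{2}$ collapses to $\lambda\,\frac{f(u)+f(v)}{2}$, and the coefficient $\frac{2-\lambda-\mu}{2}$ becomes $1-\lambda$, so the expression inside the absolute value becomes exactly
\[
\lambda\,\frac{f(u)+f(v)}{2}+(1-\lambda)\,f\!\left(\frac{u+v}{2}\right)-\frac{1}{v-u}\int_{u}^{v}f(x)\,dx,
\]
matching the left-hand side stated in the corollary.

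Next I would process the right-hand side of (\ref{xx}). Under $\mu=\lambda$, the two H\"older factors
\[
\left(\frac{(1-\lambda)^{p+1}+\lambda^{p+1}}{p+1}\right)^{\!1/p}\qquad\text{and}\qquad\left(\frac{\mu^{p+1}+(1-\mu)^{p+1}}{p+1}\right)^{\!1/p}
\]
become identical, allowing me to factor them (together with the common $\frac{v-u}{4}$) out in front of a sum of two $L^{r}$-type terms; the remaining two $s$-convexity factors stay unchanged. The result is precisely the bracketed sum displayed in the corollary.

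There is no real obstacle: the proof is a one-line substitution followed by a routine factorization. The only point worth verifying carefully is that the two coefficients of $|f'(u)|^r$ and $|f'(v)|^r$ in the two $s$-convexity brackets remain distinct (the first still has $\frac{2^{s+1}-1}{2^s(s+1)}$ on $|f'(u)|^r$ while the second has it on $|f'(v)|^r$), so the sum in braces cannot be simplified further without additional assumptions on $s$, $r$, or $f$. Once this is checked, invoking Theorem \ref{t0} with $\mu=\lambda$ completes the proof.
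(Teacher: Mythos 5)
Your proposal is correct and matches the paper's own route exactly: the paper derives this corollary by the same substitution $\mu=\lambda$ in Theorem \ref{t0}, with the two identical H\"older factors pulled out in front of the braced sum. Nothing further is needed.
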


\begin{remark}
In Theorem \ref{t0}, if we take $\lambda =%
\mu
=1/3,$ then Theorem \ref{t0} reduces to \cite[Teorem 9]{sari2}. Hence, the
result in Theorem \ref{t0} is generalizations of the results of Sarikaya et
al. in \cite{sari2}.
\end{remark}

If we take $s=1$ and $\lambda =%
\mu
=1/2,2/3,1/3$, respectively, in Theorem \ref{t0}, the following inequalities
can be deduced.

\begin{corollary}
Let $f:I\subseteq 
\mathbb{R}
\rightarrow 
\mathbb{R}
$ be a differentiable function on $I^{\circ }$, $u,v\in I$ with $u<v,$ and $%
f^{\prime }\in L\left[ u,v\right] .$ If $\left\vert f^{\prime }\left(
x\right) \right\vert ^{r}$ is convex function on $\left[ u,v\right] $ for $%
1/p+1/r=1$, then%
\begin{eqnarray}
&& \\
&&\left\vert \frac{1}{2}\left[ \frac{f\left( u\right) +f\left( v\right) }{2}%
+f\left( \frac{u+v}{2}\right) \right] -\frac{1}{v-u}\int_{u}^{v}f\left(
x\right) dx\right\vert  \notag \\
&\leq &\frac{v-u}{8\left( p+1\right) ^{1/p}4^{1/r}}\times \left[ \left(
3\left\vert f^{\prime }\left( u\right) \right\vert ^{r}+\left\vert f^{\prime
}\left( v\right) \right\vert ^{r}\right) ^{\frac{1}{r}}+\left( \left\vert
f^{\prime }\left( u\right) \right\vert ^{r}+3\left\vert f^{\prime }\left(
v\right) \right\vert ^{r}\right) ^{\frac{1}{r}}\right]  \notag
\end{eqnarray}%
\begin{eqnarray}
&& \\
&&\left\vert \frac{1}{3}\left[ f\left( u\right) +f\left( v\right) +f\left( 
\frac{u+v}{2}\right) \right] -\frac{1}{v-u}\int_{u}^{v}f\left( x\right)
dx\right\vert  \notag \\
&\leq &\frac{v-u}{4^{1+1/r}}\left( \frac{\left( 1+2^{p+1}\right) }{%
3^{p+1}\left( p+1\right) }\right) ^{\frac{1}{p}}\times \left[ \left(
3\left\vert f^{\prime }\left( u\right) \right\vert ^{r}+\left\vert f^{\prime
}\left( v\right) \right\vert ^{r}\right) ^{\frac{1}{r}}+\left( \left\vert
f^{\prime }\left( u\right) \right\vert ^{r}+3\left\vert f^{\prime }\left(
v\right) \right\vert ^{r}\right) ^{\frac{1}{r}}\right]  \notag
\end{eqnarray}%
\begin{eqnarray}
&&  \label{ss} \\
&&\left\vert \frac{1}{6}\left[ f\left( u\right) +f\left( v\right) +4f\left( 
\frac{u+v}{2}\right) \right] -\frac{1}{v-u}\int_{u}^{v}f\left( x\right)
dx\right\vert  \notag \\
&\leq &\frac{v-u}{4^{1+1/r}}\left( \frac{\left( 1+2^{p+1}\right) }{%
3^{p+1}\left( p+1\right) }\right) ^{\frac{1}{p}}\times \left[ \left(
3\left\vert f^{\prime }\left( u\right) \right\vert ^{r}+\left\vert f^{\prime
}\left( v\right) \right\vert ^{r}\right) ^{\frac{1}{r}}+\left( \left\vert
f^{\prime }\left( u\right) \right\vert ^{r}+3\left\vert f^{\prime }\left(
v\right) \right\vert ^{r}\right) ^{\frac{1}{r}}\right]  \notag
\end{eqnarray}
\end{corollary}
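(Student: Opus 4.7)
The plan is to specialize Theorem \ref{t0} to $s=1$ with $\lambda=\mu$ and then simply substitute the three prescribed values of $\lambda$. With $s=1$, the universal constants collapse: $\frac{2^{s+1}-1}{2^{s}(s+1)}=\frac{3}{4}$ and $\frac{1}{2^{s}(s+1)}=\frac{1}{4}$. Hence each inner bracket on the right of \eqref{xx} becomes $\frac{1}{4}(3|f^{\prime}(u)|^{r}+|f^{\prime}(v)|^{r})$ or $\frac{1}{4}(|f^{\prime}(u)|^{r}+3|f^{\prime}(v)|^{r})$; pulling the factor $1/4$ outside the $r$th root yields the common prefactor $\frac{v-u}{4^{1+1/r}}$ that appears in all three displayed inequalities.

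Next, setting $\lambda=\mu$ merges the two $p$-weights into a single factor $\left(\frac{(1-\lambda)^{p+1}+\lambda^{p+1}}{p+1}\right)^{1/p}$, and the left-hand side of \eqref{xx} reduces to $\lambda\frac{f(u)+f(v)}{2}+(1-\lambda)f\!\left(\frac{u+v}{2}\right)-\frac{1}{v-u}\int_{u}^{v}f(x)\,dx$, exactly as in the preceding corollary. This places \eqref{xx} in the form ready for direct substitution.

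The last step is to evaluate $(1-\lambda)^{p+1}+\lambda^{p+1}$ at the three chosen values. For $\lambda=1/2$ it equals $2^{-p}$, producing the coefficient $\frac{1}{2(p+1)^{1/p}}$, which combined with the common prefactor above yields the first bound; meanwhile the LHS becomes $\frac{1}{2}\left[\frac{f(u)+f(v)}{2}+f\!\left(\frac{u+v}{2}\right)\right]$. For $\lambda=2/3$ and $\lambda=1/3$ it equals $\frac{1+2^{p+1}}{3^{p+1}}$ in both cases, by the symmetry $\lambda\leftrightarrow 1-\lambda$, while the left sides become $\frac{1}{3}[f(u)+f(v)+f(\frac{u+v}{2})]$ and $\frac{1}{6}[f(u)+f(v)+4f(\frac{u+v}{2})]$ respectively by elementary arithmetic.

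No genuine obstacle arises beyond bookkeeping; the only observation worth flagging is that the $\lambda\leftrightarrow 1-\lambda$ symmetry of $(1-\lambda)^{p+1}+\lambda^{p+1}$ is precisely what forces the second and third displayed bounds to share the same right-hand side even though their left-hand sides are different (one a uniform average, the other the Simpson-type weighted average). Apart from this, the entire corollary is a routine substitution into Theorem \ref{t0}.
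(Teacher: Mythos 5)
Your proposal is correct and follows exactly the route the paper intends: the corollary is obtained by specializing Theorem \ref{t0} to $s=1$ and $\lambda=\mu\in\{1/2,2/3,1/3\}$, and all of your constant evaluations ($\tfrac{3}{4}$, $\tfrac{1}{4}$, $2^{-p}$, $\tfrac{1+2^{p+1}}{3^{p+1}}$) and left-hand-side reductions check out. The remark about the $\lambda\leftrightarrow 1-\lambda$ symmetry explaining the identical right-hand sides of the second and third inequalities is a nice touch, consistent with the paper.
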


If taking $\lambda =%
\mu
=1/2$, in Theorem \ref{t0}, the following inequalities can be deduced.

\begin{corollary}
Let $f:I\subseteq 
\mathbb{R}
\rightarrow 
\mathbb{R}
$ be a differentiable function on $I^{\circ }$, $u,v\in I,$ and $f^{\prime
}\in L\left[ u,v\right] .$ If $\left\vert f^{\prime }\left( x\right)
\right\vert ^{r}$ is $s$-convex function in the second sense on $\left[ u,v%
\right] $ for some fixed $s\in \left( 0,1\right] ,$ and $1/p+1/r=1$ and%
\begin{equation*}
\frac{f\left( u\right) +f\left( v\right) }{2}=f\left( \frac{u+v}{2}\right)
\end{equation*}%
then%
\begin{eqnarray}
&&\left\vert \frac{f\left( u\right) +f\left( v\right) }{2}-\frac{1}{v-u}%
\int_{u}^{v}f\left( x\right) dx\right\vert \\
&=&\left\vert f\left( \frac{u+v}{2}\right) -\frac{1}{v-u}\int_{u}^{v}f\left(
x\right) dx\right\vert  \notag \\
&\leq &\frac{v-u}{8\left( p+1\right) ^{\frac{1}{p}}}\left[ \left( \frac{%
2^{s+1}-1}{2^{s}\left( s+1\right) }\left\vert f^{\prime }\left( u\right)
\right\vert ^{r}+\frac{1}{2^{s}\left( s+1\right) }\left\vert f^{\prime
}\left( v\right) \right\vert ^{r}\right) ^{\frac{1}{r}}\right.  \notag \\
&&\left. +\left( \frac{1}{2^{s}\left( s+1\right) }\left\vert f^{\prime
}\left( u\right) \right\vert ^{r}+\frac{2^{s+1}-1}{2^{s}\left( s+1\right) }%
\left\vert f^{\prime }\left( v\right) \right\vert ^{r}\right) ^{\frac{1}{r}}%
\right]  \notag
\end{eqnarray}
\end{corollary}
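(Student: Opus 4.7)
The plan is to obtain this corollary as a direct specialization of Theorem~\ref{t0}, so the work is almost entirely bookkeeping rather than genuine estimation. First I would substitute $\lambda = \mu = 1/2$ into the inequality~(\ref{xx}). On the left-hand side this turns the weighted combination into
\[
\left| \tfrac{1}{2}\cdot\tfrac{f(u)+f(v)}{2} + \tfrac{1}{2}\, f\!\left(\tfrac{u+v}{2}\right) - \tfrac{1}{v-u}\int_u^v f(x)\,dx \right|,
\]
after which the hypothesis $\frac{f(u)+f(v)}{2}=f(\frac{u+v}{2})$ lets me replace either of the two boundary terms by the other, collapsing the expression to both equal forms displayed in the statement.

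Second, I would simplify the two constants appearing in the Hölder factors. With $\lambda=\mu=1/2$ each of $(1-\lambda)^{p+1}+\lambda^{p+1}$ and $\mu^{p+1}+(1-\mu)^{p+1}$ equals $2\cdot(1/2)^{p+1}=1/2^{p}$, so that
\[
\left(\frac{(1-\lambda)^{p+1}+\lambda^{p+1}}{p+1}\right)^{\!1/p} = \left(\frac{\mu^{p+1}+(1-\mu)^{p+1}}{p+1}\right)^{\!1/p} = \frac{1}{2(p+1)^{1/p}}.
\]
Combining with the prefactor $\frac{v-u}{4}$ from Theorem~\ref{t0} produces $\frac{v-u}{8(p+1)^{1/p}}$, which is precisely the constant in the claimed bound; the two $s$-convex $r$-th moment factors are carried over unchanged and summed.

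There is essentially no hard step here: the only points that could go wrong are the arithmetic of the two substitutions and correctly invoking the side hypothesis to identify the left-hand side in two equivalent ways. Once both of those are carried out, the inequality is just the statement of Theorem~\ref{t0} rewritten with the specialized parameters, and the proof is complete.
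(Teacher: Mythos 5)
Your proposal is correct and is exactly the route the paper takes: the corollary is obtained by setting $\lambda=\mu=1/2$ in Theorem~\ref{t0}, using the hypothesis $\frac{f(u)+f(v)}{2}=f\left(\frac{u+v}{2}\right)$ to rewrite the left-hand side in the two equivalent forms, and simplifying $\left(\frac{2(1/2)^{p+1}}{p+1}\right)^{1/p}=\frac{1}{2(p+1)^{1/p}}$ to get the constant $\frac{v-u}{8(p+1)^{1/p}}$. The arithmetic checks out and nothing is missing.
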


\begin{corollary}
In Theorem \ref{t0}, when $s,$ $\lambda ,$ $\mu ,$ $p,$ $r$ are taken like
in the following, we obtain

i) For $s=0.3,\lambda =0.3,\mu =0.3,p=2,r=2$ in (\ref{xx}), 
\begin{eqnarray*}
&&\left\vert \frac{3}{10}\frac{f\left( u\right) +f\left( v\right) }{2}+\frac{%
14}{20}f\left( \frac{u+v}{2}\right) -\frac{1}{b-u}\int_{u}^{v}f\left(
x\right) dx\right\vert \\
&\leq &\frac{0.351\left( v-u\right) }{4}\left[ \left( 0.914\left\vert
f^{\prime }\left( u\right) \right\vert ^{2}+0.625\left\vert f^{\prime
}\left( v\right) \right\vert ^{2}\right) ^{\frac{1}{2}}+\left(
0.625\left\vert f^{\prime }\left( u\right) \right\vert ^{r}+0.914\left\vert
f^{\prime }\left( v\right) \right\vert ^{2}\right) ^{\frac{1}{2}}\right]
\end{eqnarray*}%
$\ $

ii) Taking $s=0.5,\lambda =0.5,\mu =0.5,p=2,r=2$ in (\ref{xx}) gives;%
\begin{eqnarray*}
&&\left\vert \frac{1}{2}\left[ \frac{f\left( u\right) +f\left( v\right) }{2}%
+f\left( \frac{u+v}{2}\right) \right] -\frac{1}{v-u}\int_{u}^{v}f\left(
x\right) dx\right\vert \\
&\leq &\frac{0.289\left( v-u\right) }{4}\left[ \left( 0.862\left\vert
f^{\prime }\left( u\right) \right\vert ^{2}+0.471\left\vert f^{\prime
}\left( v\right) \right\vert ^{2}\right) ^{\frac{1}{2}}+\left(
0.471\left\vert f^{\prime }\left( u\right) \right\vert ^{2}+0.862\left\vert
f^{\prime }\left( v\right) \right\vert ^{2}\right) ^{\frac{1}{2}}\right]
\end{eqnarray*}

iii) Taking $s=0.75,\lambda =0.3,\mu =0.7,p=10,r=10/9$ in (\ref{xx}) gives;%
\begin{eqnarray*}
&&\left\vert \frac{3f\left( u\right) +7f\left( v\right) }{20}+\frac{1}{2}%
f\left( \frac{u+v}{2}\right) -\frac{1}{v-u}\int_{u}^{v}f\left( x\right)
dx\right\vert \\
&\leq &\frac{0.531\left( v-u\right) }{4}\left[ \left( 0.803\left\vert
f^{\prime }\left( u\right) \right\vert ^{10/9}+\allowbreak 0.34\left\vert
f^{\prime }\left( v\right) \right\vert ^{10/9}\right) ^{\frac{9}{10}}+\left(
0.34\left\vert f^{\prime }\left( u\right) \right\vert
^{10/9}+0.803\left\vert f^{\prime }\left( v\right) \right\vert
^{10/9}\right) ^{\frac{9}{10}}\right]
\end{eqnarray*}

iv) Taking $s=0.4,\lambda =0.2,\mu =0.8,p=3,r=3/2$ in (\ref{xx}) gives;%
\begin{eqnarray*}
&&\left\vert \frac{f\left( u\right) +4f\left( v\right) }{10}+\frac{1}{2}%
f\left( \frac{u+v}{2}\right) -\frac{1}{v-u}\int_{u}^{v}f\left( x\right)
dx\right\vert \\
&\leq &\frac{0.468\left( v-u\right) }{4}\allowbreak \left[ \left(
0.887\left\vert f^{\prime }\left( u\right) \right\vert
^{3/2}+0.541\left\vert f^{\prime }\left( v\right) \right\vert ^{3/2}\right)
^{\frac{2}{3}}+\left( 0.541\left\vert f^{\prime }\left( u\right) \right\vert
^{3/2}+0.887\left\vert f^{\prime }\left( v\right) \right\vert ^{3/2}\right)
^{\frac{2}{3}}\right]
\end{eqnarray*}

v) Taking $s=0.4,\lambda =0.2,\mu =0.8,p=e,r=e/\left( e-1\right) $ in (\ref%
{xx}) gives;%
\begin{eqnarray*}
&&\left\vert \frac{f\left( u\right) +4f\left( v\right) }{10}+\frac{1}{2}%
f\left( \frac{u+v}{2}\right) -\frac{1}{v-u}\int_{u}^{v}f\left( x\right)
dx\right\vert \\
&\leq &\frac{0.455\left( v-u\right) }{4}\allowbreak \left[ \left(
0.887\left\vert f^{\prime }\left( u\right) \right\vert ^{e/\left( e-1\right)
}+0.541\left\vert f^{\prime }\left( v\right) \right\vert ^{e/\left(
e-1\right) }\right) ^{\frac{e}{e-1}}\right. \\
&&\left. +\left( 0.541\left\vert f^{\prime }\left( u\right) \right\vert
^{e/\left( e-1\right) }+0.887\left\vert f^{\prime }\left( v\right)
\right\vert ^{e/\left( e-1\right) }\right) ^{\frac{e}{e-1}}\right]
\end{eqnarray*}

vi) Taking $s=1,\lambda =1/3,\mu =2/3,p=r=2$ in (\ref{xx}) gives;%
\begin{eqnarray*}
&&\left\vert \frac{f\left( u\right) +2f\left( v\right) }{6}+\frac{1}{2}%
f\left( \frac{u+v}{2}\right) -\frac{1}{v-u}\int_{u}^{v}f\left( x\right)
dx\right\vert \\
&\leq &\frac{v-u}{12}\left[ \left( \frac{3}{4}\left\vert f^{\prime }\left(
u\right) \right\vert ^{2}+\frac{1}{4}\left\vert f^{\prime }\left( v\right)
\right\vert ^{2}\right) ^{\frac{1}{2}}+\left( \frac{1}{4}\left\vert
f^{\prime }\left( u\right) \right\vert ^{2}+\frac{3}{4}\left\vert f^{\prime
}\left( v\right) \right\vert ^{2}\right) ^{\frac{1}{2}}\right]
\end{eqnarray*}%
etc.
\end{corollary}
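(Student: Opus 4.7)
The plan is to invoke Theorem \ref{t0} directly and specialize the free parameters $s,\lambda,\mu,p,r$ to each of the six numerical tuples listed in (i)--(vi). No new analytic ideas are required; in each case the proof reduces to evaluating three ingredients of the bound (\ref{xx}): the left-hand weighted combination
\[
\frac{\lambda f(u)+\mu f(v)}{2} + \frac{2-\lambda-\mu}{2}\,f\!\left(\frac{u+v}{2}\right),
\]
the two $p$-factors $\left(\frac{(1-\lambda)^{p+1}+\lambda^{p+1}}{p+1}\right)^{1/p}$ and $\left(\frac{\mu^{p+1}+(1-\mu)^{p+1}}{p+1}\right)^{1/p}$, and the $s$-dependent coefficients $\frac{2^{s+1}-1}{2^s(s+1)}$ and $\frac{1}{2^s(s+1)}$ appearing inside the $r$-norms.

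For each item, I would first rewrite the linear combination on the left: for instance (i) with $\lambda=\mu=0.3$ produces the displayed weights $3/10$ on $(f(u)+f(v))/2$ and $14/20$ on $f((u+v)/2)$, while (iii) with $\lambda=0.3,\mu=0.7$ gives $(3f(u)+7f(v))/20$ and $1/2$ on the midpoint term. Next I would evaluate the $p$-factor numerically; in (i) with $p=2$ this is $\sqrt{(0.7^3+0.3^3)/3}\approx 0.351$, matching the stated constant. In case (vi), where $\lambda=1/3,\mu=2/3$ and $p=2$, both $p$-factors collapse to exactly $1/3$, which explains the clean denominator $12=4\cdot 3$ in the final bound.

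Then I would evaluate the $s$-dependent coefficients for each prescribed $s$: e.g.\ $s=1$ yields $3/4$ and $1/4$, while $s=0.3$ yields $\frac{2^{1.3}-1}{2^{0.3}\cdot 1.3}\approx 0.914$ and $\frac{1}{2^{0.3}\cdot 1.3}\approx 0.625$, matching the coefficients in (i); items (ii)--(v) are handled identically. Substituting all of these numerical constants into (\ref{xx}) and collecting terms yields each displayed inequality.

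The only real obstacle is bookkeeping: six different parameter tuples must be processed, and the transcendental expressions $2^{s+1}$, $2^s(s+1)$ for the non-integer values of $s$ must be evaluated to the three-decimal accuracy reported in the statement. There is no analytic subtlety beyond what Theorem \ref{t0} already supplies.
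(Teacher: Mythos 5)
Your proposal is correct and is exactly the paper's (implicit) argument: the corollary is stated without proof because each item follows from Theorem \ref{t0} by direct substitution of the given $s,\lambda,\mu,p,r$ and numerical evaluation of the constants, which your computations reproduce (e.g.\ $\bigl((0.7^{3}+0.3^{3})/3\bigr)^{1/2}\approx 0.351$ and $(2^{1.3}-1)/(2^{0.3}\cdot 1.3)\approx 0.914$ in case (i), and the exact collapse to $1/3$ of both $p$-factors in case (vi)).
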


\begin{theorem}
\label{t1}Let $f:I\subseteq 
\mathbb{R}
\rightarrow 
\mathbb{R}
$ be differentiable function on $I^{\circ }$, $u,v\in I,$ $0\leq \lambda ,$ $%
\mu \leq 1,$ and $f^{\prime }\in L\left[ u,v\right] .$ If $\left\vert
f^{\prime }\left( x\right) \right\vert ^{r}$ is $s$-convex function in the
second sense on $\left[ u,v\right] $ for $r\geq 1$, then%
\begin{eqnarray}
&&\left\vert \frac{\lambda f\left( u\right) +\mu f\left( v\right) }{2}+\frac{%
2-\lambda -\mu }{2}f\left( \frac{u+v}{2}\right) -\frac{1}{v-u}%
\int_{u}^{v}f\left( x\right) dx\right\vert  \label{20} \\
&\leq &\frac{v-u}{8}\left( \frac{1}{2^{s-1}\left( s+1\right) \left(
s+2\right) }\right) ^{1/r}  \notag \\
&&\left\{ \left( \left( 2\lambda ^{2}-2\lambda +1\right) \right)
^{1-1/r}\times \left[ E\left\vert f^{\prime }\left( u\right) \right\vert
^{r}+L\left\vert f^{\prime }\left( v\right) \right\vert ^{r}\right]
^{1/r}\right.  \notag \\
&&+\left. \left( \left( 2\mu ^{2}-2\mu +1\right) \right) ^{1-1/r}\times 
\left[ I\left\vert f^{\prime }\left( u\right) \right\vert ^{r}+F\left\vert
f^{\prime }\left( v\right) \right\vert ^{r}\right] ^{1/r}\right\}  \notag
\end{eqnarray}%
where%
\begin{eqnarray*}
E &=&\left( 2\left( 2-\lambda \right) ^{s+2}+\left( \lambda -1\right) \left(
s+2^{s+2}+2\right) +2^{s+1}s\lambda -1\right) \allowbreak \\
L &=&\left( 2\lambda ^{s+2}+s\left( 1-\lambda \right) -2\lambda +1\right) \\
I &=&\left( 2\mu ^{s+2}+s\left( 1-\mu \right) -2\mu +1\right) \allowbreak \\
F &=&\left( 2\left( 2-\mu \right) ^{s+2}+\left( \mu -1\right) \left(
s+2^{s+2}+2\right) +2^{s+1}s\mu -1\right) \allowbreak .
\end{eqnarray*}
\end{theorem}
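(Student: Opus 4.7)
The plan is to mirror the H\"older-based argument of Theorem~\ref{t0} but, because here only the exponent $r\geq 1$ is available (with no conjugate $p$), to replace H\"older by the power-mean inequality
\[
\int_0^1 g(\alpha)h(\alpha)\,d\alpha \;\leq\; \Bigl(\int_0^1 g(\alpha)\,d\alpha\Bigr)^{1-1/r}\Bigl(\int_0^1 g(\alpha)h(\alpha)^{r}\,d\alpha\Bigr)^{1/r},
\]
valid for $r\geq 1$ and $g\geq 0$. First I invoke Lemma~\ref{l1}, take absolute values, and split the right-hand side into two pieces weighted by $|1-\lambda-\alpha|$ and $|\mu-\alpha|$ respectively. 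Applying the power-mean inequality to each, the unweighted integrals are evaluated by splitting at the zeros of the weights:
\[
\int_0^1 |1-\lambda-\alpha|\,d\alpha = \tfrac{2\lambda^{2}-2\lambda+1}{2}, \qquad \int_0^1 |\mu-\alpha|\,d\alpha = \tfrac{2\mu^{2}-2\mu+1}{2},
\]
which produces the $(2\lambda^{2}-2\lambda+1)^{1-1/r}$ and $(2\mu^{2}-2\mu+1)^{1-1/r}$ prefactors appearing in \eqref{20}.

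For the $r$-th power integrals, I rewrite the arguments of $f'$ as convex combinations of the endpoints,
\[
\alpha u+(1-\alpha)\tfrac{u+v}{2}=\tfrac{1+\alpha}{2}u+\tfrac{1-\alpha}{2}v, \qquad \alpha\tfrac{u+v}{2}+(1-\alpha)v=\tfrac{\alpha}{2}u+\tfrac{2-\alpha}{2}v,
\]
and apply $s$-convexity of $|f'|^{r}$. The problem is thereby reduced to evaluating four weighted integrals, namely $\int_0^1|1-\lambda-\alpha|(1+\alpha)^{s}\,d\alpha$, $\int_0^1|1-\lambda-\alpha|(1-\alpha)^{s}\,d\alpha$, and their $\mu$-analogues $\int_0^1|\mu-\alpha|\alpha^{s}\,d\alpha$, $\int_0^1|\mu-\alpha|(2-\alpha)^{s}\,d\alpha$; the latter two are obtained from the former by the symmetry $\lambda\leftrightarrow\mu$ combined with the change of variables $\alpha\mapsto 1-\alpha$.

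The main obstacle is the explicit evaluation of these four integrals. Each requires splitting the absolute value at its zero, performing an affine substitution such as $w=1+\alpha$ or $w=2-\alpha$, integrating two polynomial pieces, and then collecting terms to verify that the result equals $\tfrac{E}{(s+1)(s+2)}$, $\tfrac{L}{(s+1)(s+2)}$, $\tfrac{I}{(s+1)(s+2)}$, $\tfrac{F}{(s+1)(s+2)}$ respectively, with $E,L,I,F$ as in the statement; the verification is routine but algebraically heavy. Finally, collecting the factor $2^{-(1-1/r)}$ from the unweighted integral together with the $2^{-s/r}$ coming from the $1/2^{s}$ in the $s$-convexity bound, and using the identity $1-\tfrac{1}{r}+\tfrac{s}{r}=1+\tfrac{s-1}{r}$, these combine with the prefactor $\tfrac{v-u}{4}$ from Lemma~\ref{l1} into $\tfrac{v-u}{8}\bigl(\tfrac{1}{2^{s-1}(s+1)(s+2)}\bigr)^{1/r}$, which is exactly the constant appearing in \eqref{20}.
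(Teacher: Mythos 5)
Your proposal is correct and follows essentially the same route as the paper: Lemma~\ref{l1}, the power-mean inequality with weights $\left\vert 1-\lambda -\alpha \right\vert $ and $\left\vert \mu -\alpha \right\vert $, $s$-convexity of $\left\vert f^{\prime }\right\vert ^{r}$ after rewriting the arguments as convex combinations of $u$ and $v$, and the evaluation of the same four weighted integrals (which the paper likewise states only as the outcome of a ``direct calculation''). Your bookkeeping of the powers of $2$ leading to the constant $\frac{v-u}{8}\bigl(\frac{1}{2^{s-1}(s+1)(s+2)}\bigr)^{1/r}$ and your observation that the $\mu$-integrals follow from the $\lambda$-integrals by the substitution $\alpha \mapsto 1-\alpha$ are both accurate.
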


\begin{proof}
For $r\geq 1,$ from Lemma \ref{l1}, by using the $s$-convexity of $%
\left\vert f^{\prime }\left( x\right) \right\vert ^{r}$ on $\left[ u,v\right]
,$ and the famous power mean inequality, we can write 
\begin{eqnarray*}
&&\left\vert \frac{\lambda f\left( u\right) +\mu f\left( v\right) }{2}+\frac{%
2-\lambda -\mu }{2}f\left( \frac{u+v}{2}\right) -\frac{1}{v-u}%
\int_{u}^{v}f\left( x\right) dx\right\vert \\
&\leq &\frac{v-u}{4}\left[ \int_{0}^{1}\left\vert 1-\lambda -\alpha
\right\vert \left\vert f^{\prime }\left( \alpha u+\left( 1-\alpha \right) 
\frac{u+v}{2}\right) \right\vert d\alpha +\int_{0}^{1}\left\vert \mu -\alpha
\right\vert \left\vert f^{\prime }\left( \alpha \frac{u+v}{2}+\left(
1-\alpha \right) v\right) \right\vert d\alpha \right] \\
&\leq &\frac{v-u}{4}\left\{ \left( \int_{0}^{1}\left\vert 1-\lambda -\alpha
\right\vert d\alpha \right) ^{1-1/r}\left[ \int_{0}^{1}\left\vert 1-\lambda
-\alpha \right\vert \left( \left( \frac{1+\alpha }{2}\right) ^{s}\left\vert
f^{\prime }\left( u\right) \right\vert ^{r}+\left( \frac{1-\alpha }{2}%
\right) ^{s}\left\vert f^{\prime }\left( v\right) \right\vert ^{r}\right)
d\alpha \right] ^{1/r}\right. \\
&&+\left. \left( \int_{0}^{1}\left\vert \mu -\alpha \right\vert d\alpha
\right) ^{1-1/r}\left[ \int_{0}^{1}\left\vert \mu -\alpha \right\vert \left(
\left( \frac{\alpha }{2}\right) ^{s}\left\vert f^{\prime }\left( u\right)
\right\vert ^{r}+\left( \frac{2-\alpha }{2}\right) ^{s}\left\vert f^{\prime
}\left( v\right) \right\vert ^{r}\right) dt\right] ^{1/r}\right\}
\end{eqnarray*}%
By direct calculation, we obtain%
\begin{eqnarray*}
&&\int_{0}^{1}\left\vert 1-\lambda -\alpha \right\vert \left( \left( \frac{%
1+\alpha }{2}\right) ^{s}\left\vert f^{\prime }\left( u\right) \right\vert
^{r}+\left( \frac{1-\alpha }{2}\right) ^{s}\left\vert f^{\prime }\left(
v\right) \right\vert ^{r}\right) d\alpha \\
&=&\frac{\left\vert f^{\prime }\left( u\right) \right\vert ^{r}}{2^{s}}%
\int_{0}^{1}\left\vert 1-\lambda -\alpha \right\vert \left( 1+\alpha \right)
^{s}d\alpha +\frac{\left\vert f^{\prime }\left( v\right) \right\vert ^{r}}{%
2^{s}}\int_{0}^{1}\left\vert 1-\lambda -\alpha \right\vert \left( 1-\alpha
\right) ^{s}d\alpha \\
&=&\frac{\left\vert f^{\prime }\left( u\right) \right\vert ^{r}}{2^{s}\left(
s+1\right) \left( s+2\right) }\left( 2\left( 2-\lambda \right) ^{s+2}+\left(
\lambda -1\right) \left( s+2^{s+2}+2\right) +2^{s+1}s\lambda -1\right)
\allowbreak \\
&&+\frac{\left\vert f^{\prime }\left( v\right) \right\vert ^{r}}{2^{s}\left(
s+1\right) \left( s+2\right) }\left( 2\lambda ^{s+2}+s\left( 1-\lambda
\right) -2\lambda +1\right)
\end{eqnarray*}%
Similarly, we have%
\begin{eqnarray*}
&&\int_{0}^{1}\left\vert \mu -\alpha \right\vert \left( \left( \frac{\alpha 
}{2}\right) ^{s}\left\vert f^{\prime }\left( u\right) \right\vert
^{r}+\left( \frac{2-\alpha }{2}\right) ^{s}\left\vert f^{\prime }\left(
v\right) \right\vert ^{r}\right) d\alpha \\
&=&\frac{\left\vert f^{\prime }\left( u\right) \right\vert ^{r}}{2^{s}}%
\int_{0}^{1}\left\vert \mu -\alpha \right\vert \alpha ^{s}dt+\frac{%
\left\vert f^{\prime }\left( v\right) \right\vert ^{r}}{2^{s}}%
\int_{0}^{1}\left\vert \mu -\alpha \right\vert \left( 2-\alpha \right)
^{s}d\alpha \\
&=&\frac{\left\vert f^{\prime }\left( u\right) \right\vert ^{r}}{2^{s}\left(
s+1\right) \left( s+2\right) }\left( 2\mu ^{s+2}+s\left( 1-\mu \right) -2\mu
+1\right) \allowbreak \\
&&+\frac{\left\vert f^{\prime }\left( v\right) \right\vert ^{r}}{2^{s}\left(
s+1\right) \left( s+2\right) }\left( 2\left( 2-\mu \right) ^{s+2}+\left( \mu
-1\right) \left( s+2^{s+2}+2\right) +2^{s+1}s\mu -1\right) \allowbreak
\end{eqnarray*}%
\begin{equation*}
\int_{0}^{1}\left\vert 1-\lambda -\alpha \right\vert d\alpha =\frac{1}{2}%
\left( 2\lambda ^{2}-2\lambda +1\right)
\end{equation*}%
\begin{equation*}
\int_{0}^{1}\left\vert \mu -\alpha \right\vert d\alpha =\frac{1}{2}\left(
2\mu ^{2}-2\mu +1\right)
\end{equation*}%
Replace with the above four equalities into the inequality and the proof is
finished.
\end{proof}

\begin{remark}
In Theorem \ref{t1}, if we take $s=1,$ then Theorem \ref{t1} reduces to \cite%
[Teorem 3.1]{xi}. Hence, the result in Theorem \ref{t1} is generalizations
of the results of Xi et al. in \cite[Theorem 3.1]{xi}.
\end{remark}

If taking $\lambda =%
\mu
$ in (\ref{20}), we derive the following corollary.

\begin{corollary}
Let $f:I\subseteq 
\mathbb{R}
\rightarrow 
\mathbb{R}
$ be differentiable function on $I^{\circ }$, $u,v\in I,$ $0\leq \lambda
\leq 1,$ and $f^{\prime }\in L\left[ u,v\right] .$ If $\left\vert f^{\prime
}\left( x\right) \right\vert ^{r}$ is $s$-convex function in the second
sense on $\left[ u,v\right] $ for some fixed $s\in \left( 0,1\right] ,$ and $%
r\geq 1$, then%
\begin{eqnarray}
&&\left\vert \frac{\lambda f\left( u\right) +\lambda f\left( v\right) }{2}+%
\frac{2-2\lambda }{2}f\left( \frac{u+v}{2}\right) -\frac{1}{v-u}%
\int_{u}^{v}f\left( x\right) dx\right\vert \\
&\leq &\frac{v-u}{8\left( 2^{s-1}\left( s+1\right) \left( s+2\right) \right)
^{1/r}}\left( \left( 2\lambda ^{2}-2\lambda +1\right) \right) ^{1-1/r} 
\notag \\
&&\times \left\{ \left[ \left\vert f^{\prime }\left( u\right) \right\vert
^{r}E+\left\vert f^{\prime }\left( v\right) \right\vert ^{r}L\right] ^{1/r}+%
\left[ \left\vert f^{\prime }\left( u\right) \right\vert ^{r}L+\left\vert
f^{\prime }\left( v\right) \right\vert ^{r}E\right] ^{1/r}\right\}  \notag
\end{eqnarray}%
where $E$ and $L$ are like above.
\end{corollary}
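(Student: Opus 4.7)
The plan is to derive this corollary as an immediate specialization of Theorem \ref{t1}, obtained by substituting $\mu = \lambda$ into the inequality (\ref{20}); no new estimate is needed. First I would check that the left-hand side of (\ref{20}) collapses correctly: $\tfrac{\lambda f(u)+\mu f(v)}{2}$ becomes $\tfrac{\lambda f(u)+\lambda f(v)}{2}$, the coefficient $\tfrac{2-\lambda-\mu}{2}$ becomes $\tfrac{2-2\lambda}{2}$, and the remaining terms $f\!\left(\tfrac{u+v}{2}\right)$ and $\tfrac{1}{v-u}\int_u^v f(x)\,dx$ do not depend on $\lambda$ or $\mu$. This matches the left-hand side of the corollary verbatim.

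Next I would inspect the four auxiliary quantities $E,L,I,F$ introduced in Theorem \ref{t1}. By direct comparison of the formulas, $I$ is obtained from $L$ by replacing $\lambda$ with $\mu$, and $F$ is obtained from $E$ by the same replacement; thus setting $\mu=\lambda$ forces $I=L$ and $F=E$. Consequently the two bracketed sums $E|f'(u)|^r+L|f'(v)|^r$ and $I|f'(u)|^r+F|f'(v)|^r$ become $E|f'(u)|^r+L|f'(v)|^r$ and $L|f'(u)|^r+E|f'(v)|^r$ respectively. Likewise, the prefactors $(2\lambda^2-2\lambda+1)^{1-1/r}$ and $(2\mu^2-2\mu+1)^{1-1/r}$ coincide after the substitution, so one may factor out a single copy of $(2\lambda^2-2\lambda+1)^{1-1/r}$ from the curly braces.

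Combining these observations with the common constant $\tfrac{v-u}{8}\left(\tfrac{1}{2^{s-1}(s+1)(s+2)}\right)^{1/r}$ from Theorem \ref{t1} produces exactly the bound displayed in the corollary. Since the entire argument is a syntactic substitution, there is no real obstacle; the only thing to watch is the symmetry $E(\lambda)\leftrightarrow F(\mu)$ and $L(\lambda)\leftrightarrow I(\mu)$ in the defining expressions, which is easily verified by inspection. The proof therefore reduces to a single sentence invoking Theorem \ref{t1} with $\mu=\lambda$.
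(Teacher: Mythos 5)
Your proposal is correct and coincides with the paper's own derivation: the corollary is obtained simply by setting $\mu=\lambda$ in inequality (\ref{20}) of Theorem \ref{t1}, and your observation that $I$ and $F$ reduce to $L$ and $E$ respectively (so the common factor $\left(2\lambda^{2}-2\lambda+1\right)^{1-1/r}$ can be pulled out) is exactly the verification needed.
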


If taking $\lambda =%
\mu
=1/2,2/3,1/3$, respectively, in Theorem \ref{t1}, the following inequalities
can be deduced.

\begin{corollary}
Let $s\in \left( 0,1\right] $ and $f:I\subseteq 
\mathbb{R}
\rightarrow 
\mathbb{R}
$ be differentiable function on $I^{\circ }$, $u,v\in I$ with $u<v,$ and $%
f^{\prime }\in L\left[ u,v\right] .$ If $\left\vert f^{\prime }\left(
x\right) \right\vert ^{r}$ is $s$-convex function in the second sense on $%
\left[ u,v\right] $ for some fixed $s\in \left( 0,1\right] ,$ and $r\geq 1$,
then%
\begin{eqnarray}
&& \\
&&\left\vert \frac{1}{2}\left[ \frac{f\left( u\right) +f\left( v\right) }{2}%
+f\left( \frac{u+v}{2}\right) \right] -\frac{1}{v-u}\int_{u}^{v}f\left(
x\right) dx\right\vert  \notag \\
&\leq &\frac{v-u}{8\left( 2^{s-1}\left( s+1\right) \left( s+2\right) \right)
^{1/r}}\left( \frac{1}{2}\right) ^{1-1/r}  \notag \\
&&\times \left\{ \left[ \left( \frac{3^{s+2}}{2^{s+1}}-\frac{%
2^{s+2}+s-2^{s+1}s+4}{2}\right) \left\vert f^{\prime }\left( u\right)
\right\vert ^{r}\allowbreak +\left( \frac{s}{2}+\frac{1}{2^{s+1}}\right)
\left\vert f^{\prime }\left( v\right) \right\vert ^{r}\right] ^{1/r}\right. 
\notag \\
&&+\left. \left[ \left( \frac{s}{2}+\frac{1}{2^{s+1}}\right) \left\vert
f^{\prime }\left( u\right) \right\vert ^{r}+\left( \frac{3^{s+2}}{2^{s+1}}-%
\frac{2^{s+2}+s-2^{s+1}s+4}{2}\right) \left\vert f^{\prime }\left( v\right)
\right\vert ^{r}\right] ^{1/r}\right\} ,  \notag
\end{eqnarray}%
\begin{eqnarray}
&& \\
&&\left\vert \frac{1}{3}\left[ f\left( u\right) +f\left( v\right) +f\left( 
\frac{u+v}{2}\right) \right] -\frac{1}{v-u}\int_{u}^{v}f\left( x\right)
dx\right\vert  \notag \\
&\leq &\frac{v-u}{8\left( 2^{s-1}\left( s+1\right) \left( s+2\right) \right)
^{1/r}}\left( \frac{1}{2}\right) ^{1-1/r}  \notag \\
&&\times \left\{ \left[ \left( \frac{2^{2s+5}}{3^{s+2}}-\frac{%
2^{s+2}+s-2^{s+2}s+5}{3}\right) \left\vert f^{\prime }\left( u\right)
\right\vert ^{r}\allowbreak +\left( \frac{s-1}{3}+\frac{2^{s+3}}{3^{s+2}}%
\right) \left\vert f^{\prime }\left( v\right) \right\vert ^{r}\right]
^{1/r}\right.  \notag \\
&&+\left. \left[ \left( \frac{s-1}{3}+\frac{2^{s+3}}{3^{s+2}}\right)
\left\vert f^{\prime }\left( u\right) \right\vert ^{r}+\left( \frac{2^{2s+5}%
}{3^{s+2}}-\frac{2^{s+2}+s-2^{s+2}s+5}{3}\right) \left\vert f^{\prime
}\left( v\right) \right\vert ^{r}\right] ^{1/r}\right\} ,  \notag
\end{eqnarray}%
\begin{eqnarray}
&& \\
&&\left\vert \frac{1}{6}\left[ f\left( u\right) +f\left( v\right) +4f\left( 
\frac{u+v}{2}\right) \right] -\frac{1}{v-u}\int_{u}^{v}f\left( x\right)
dx\right\vert  \notag \\
&\leq &\frac{v-u}{8\left( 2^{s-1}\left( s+1\right) \left( s+2\right) \right)
^{1/r}}\left( \frac{1}{2}\right) ^{1-1/r}  \notag \\
&&\times \left\{ \left[ \left( \frac{2\times 5^{s+2}}{3^{s+2}}-\frac{%
2^{s+3}+2s-2^{s+1}s+7}{3}\right) \left\vert f^{\prime }\left( u\right)
\right\vert ^{r}\allowbreak +\left( \frac{2s+1}{3}+\frac{2}{3^{s+2}}\right)
\left\vert f^{\prime }\left( v\right) \right\vert ^{r}\right] ^{1/r}\right. 
\notag \\
&&+\left. \left[ \left( \frac{2s+1}{3}+\frac{2}{3^{s+2}}\right) \left\vert
f^{\prime }\left( u\right) \right\vert ^{r}+\left( \frac{2\times 5^{s+2}}{%
3^{s+2}}-\frac{2^{s+3}+2s-2^{s+1}s+7}{3}\right) \left\vert f^{\prime }\left(
v\right) \right\vert ^{r}\right] ^{1/r}\right\} .  \notag
\end{eqnarray}
\end{corollary}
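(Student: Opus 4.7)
This corollary is obtained by specializing Theorem~\ref{t1} to $\lambda=\mu\in\{1/2,\,2/3,\,1/3\}$ in turn. The first observation is structural: when $\lambda=\mu$, the defining formulas for the auxiliary constants give $I=L$ and $F=E$ by inspection (the expression for $I$ is obtained from $L$ by renaming $\lambda\leftrightarrow\mu$, and similarly for $F$ from $E$). Hence the two $1/r$-power terms on the right-hand side of~\eqref{20} become mirror images, taking the forms $(E|f'(u)|^r+L|f'(v)|^r)^{1/r}$ and $(L|f'(u)|^r+E|f'(v)|^r)^{1/r}$, which is precisely the symmetric shape of each of the three stated inequalities.

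With this reduction in place, the remainder of the work is arithmetic. For each $\lambda$ I would first verify that
\[
\frac{\lambda f(u)+\mu f(v)}{2}+\frac{2-\lambda-\mu}{2}\,f\!\left(\frac{u+v}{2}\right)
\]
simplifies to the linear combination of $f(u)$, $f(v)$, $f((u+v)/2)$ displayed on the left-hand side (for instance, $\lambda=1/3$ yields the Simpson combination $\tfrac{1}{6}[f(u)+f(v)+4f((u+v)/2)]$, while $\lambda=1/2$ and $\lambda=2/3$ produce the other two). Then I would evaluate the prefactor $(2\lambda^2-2\lambda+1)^{1-1/r}$ and substitute the chosen $\lambda$ into
\begin{align*}
E &= 2(2-\lambda)^{s+2}+(\lambda-1)(s+2^{s+2}+2)+2^{s+1}s\lambda-1,\\
L &= 2\lambda^{s+2}+s(1-\lambda)-2\lambda+1,
\end{align*}
using identities such as $(3/2)^{s+2}=3^{s+2}/2^{s+2}$, $(4/3)^{s+2}=2^{2s+4}/3^{s+2}$, and $(5/3)^{s+2}=5^{s+2}/3^{s+2}$ to recast the outputs into the compact rational forms displayed in the three bounds.

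The main obstacle, though not a conceptual one, is the bookkeeping in these expansions: each $E$ is a sum of four terms involving $s$ and exponentials in $s$, and grouping them by powers of $2$, $3$, and $5$ requires care, especially for $\lambda\in\{1/3,\,2/3\}$ where both $(2-\lambda)^{s+2}$ and $\lambda^{s+2}$ contribute non-trivial mixed powers. No further ingredients beyond Theorem~\ref{t1} are needed: the common factor $(2^{s-1}(s+1)(s+2))^{-1/r}$ carried in from the theorem is preserved verbatim, the symmetry $I=L$, $F=E$ collapses the two bracket terms to mirror images, and the three claimed inequalities then read off directly from the resulting expressions for $E$ and $L$.
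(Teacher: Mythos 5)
Your plan is exactly the paper's (implicit) derivation: the corollary is obtained by substituting $\lambda=\mu=1/2,\,2/3,\,1/3$ into Theorem \ref{t1}, exploiting the symmetry $I=L$, $F=E$ when $\lambda=\mu$, and simplifying $E$ and $L$ with the power identities you list; all the resulting coefficients agree with the printed ones. One remark: faithfully carrying out your own step of evaluating $\left(2\lambda^{2}-2\lambda+1\right)^{1-1/r}$ gives $\left(5/9\right)^{1-1/r}$ for $\lambda=1/3$ and $\lambda=2/3$ (consistent with Proposition \ref{p3}), so the factor $\left(1/2\right)^{1-1/r}$ printed in the second and third inequalities appears to be a typo that your computation would detect and correct.
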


\begin{remark}
If setting $s=1$ in above Corollary, then we obtain the inequalities in \cite%
[pp. 6 and 7]{xi}.
\end{remark}

\begin{remark}
If setting $s=1$ and $r=1$ in above Corollary, then we obtain the
inequalities in \cite[(3.7)]{xi}. Hence, the results in above Corollary are
generalizations of the results of Xi and Qi in \cite{xi}.
\end{remark}

\section{\textbf{Applications}}

Let%
\begin{eqnarray*}
A\left( u,v\right) &=&\frac{u+v}{2},\text{ }L\left( u,v\right) =\frac{v-u}{%
\ln v-\ln u}\text{ \ \ \ \ }\left( u\neq v\right) , \\
L_{p}\left( u,v\right) &=&\left( \frac{v^{p+1}-u^{p+1}}{\left( p+1\right)
\left( v-u\right) }\right) ^{1/p},\text{ }u\neq v,\text{ }p\in 
\mathbb{R}
,\text{ }p\neq -1,0
\end{eqnarray*}%
be the arithmetic mean, logarithmic mean, generalized logarithmic mean for $%
u,v>0$ respectively.

\begin{criterion}
Let $g:%
\mathbb{R}
\rightarrow 
\mathbb{R}
_{+}$ be a non-negative convex function on $%
\mathbb{R}
$. Then $g^{s}(x)$ is $s$-convex on $I$, for some fixed $s\in \left(
0,1\right) $ (see \cite{alo}).
\end{criterion}

\begin{proposition}
\label{p1}Let $s\in \left( 0,1\right] ,$ $u,v>0,$ $1/p+1/r=1,$ $0\leq
\lambda ,$ $\mu \leq 1,$ then%
\begin{eqnarray}
&&\left\vert \frac{\lambda u^{s}+\mu v^{s}}{2}+\frac{2-\lambda -\mu }{2}%
A^{s}\left( u,v\right) -L_{s}^{s}\left( u,v\right) \right\vert  \notag \\
&\leq &\frac{v-u}{4}\left( \frac{\left( 1-\lambda \right) ^{p+1}+\lambda
^{p+1}}{p+1}\right) ^{\frac{1}{p}}\left( \frac{s\left( 2^{s+1}-1\right)
u^{r\left( s-1\right) }}{2^{s}\left( s+1\right) }+\frac{sv^{r\left(
s-1\right) }}{2^{s}\left( s+1\right) }\right) ^{\frac{1}{r}}  \notag \\
&&+\frac{v-u}{4}\left( \frac{\mu ^{p+1}+\left( 1-\mu \right) ^{p+1}}{p+1}%
\right) ^{\frac{1}{p}}\left( \frac{su^{r\left( s-1\right) }}{2^{s}\left(
s+1\right) }+\frac{s\left( 2^{s+1}-1\right) v^{r\left( s-1\right) }}{%
2^{s}\left( s+1\right) }\right) ^{\frac{1}{r}}  \notag
\end{eqnarray}%
In particular, when $\lambda =\mu =1,$ we have%
\begin{eqnarray}
&&\left\vert A\left( u^{s},v^{s}\right) -L_{s}^{s}\left( u,v\right)
\right\vert  \notag \\
&\leq &\frac{v-u}{4}\left( \frac{1}{p+1}\right) ^{\frac{1}{p}}\left( \frac{%
s\left( 2^{s+1}-1\right) u^{r\left( s-1\right) }}{2^{s}\left( s+1\right) }+%
\frac{sv^{r\left( s-1\right) }}{2^{s}\left( s+1\right) }\right) ^{\frac{1}{r}%
}  \notag \\
&&+\frac{v-u}{4}\left( \frac{1}{p+1}\right) ^{\frac{1}{p}}\left( \frac{%
su^{r\left( s-1\right) }}{2^{s}\left( s+1\right) }+\frac{s\left(
2^{s+1}-1\right) v^{r\left( s-1\right) }}{2^{s}\left( s+1\right) }\right) ^{%
\frac{1}{r}}  \notag
\end{eqnarray}
\end{proposition}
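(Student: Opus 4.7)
The plan is to apply Theorem \ref{t0} to the power function $f(x)=x^{s}$ on the interval $[u,v]\subset(0,\infty)$ and then translate the resulting inequality into the language of the classical means $A$ and $L_{s}$.

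First I would verify the hypotheses of Theorem \ref{t0}. Since $f'(x)=sx^{s-1}$, we have $|f'(x)|^{r}=s^{r}x^{r(s-1)}$. Because $s\in(0,1]$ and $r\geq 1$ give $r(s-1)\leq 0$, the map $x\mapsto x^{r(s-1)}$ is non-negative and convex on $(0,\infty)$; scaling by $s^{r}>0$ preserves this. A non-negative convex function is automatically $s$-convex in the second sense, since for $\alpha\in[0,1]$ one has $\alpha^{s}\geq\alpha$ and $(1-\alpha)^{s}\geq 1-\alpha$, so the convexity inequality implies the $s$-convexity inequality. Hence Theorem \ref{t0} is applicable to $f(x)=x^{s}$.

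Next I would match the abstract expression on the left of (\ref{xx}) with the concrete one in the proposition. With $f(x)=x^{s}$, one computes $f(u)=u^{s}$, $f(v)=v^{s}$, $f((u+v)/2)=A^{s}(u,v)$, and
\[
\frac{1}{v-u}\int_{u}^{v}x^{s}\,dx=\frac{v^{s+1}-u^{s+1}}{(s+1)(v-u)}=L_{s}^{s}(u,v),
\]
so the left-hand side of (\ref{xx}) becomes exactly
\[
\left|\frac{\lambda u^{s}+\mu v^{s}}{2}+\frac{2-\lambda-\mu}{2}A^{s}(u,v)-L_{s}^{s}(u,v)\right|.
\]

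The last step is purely mechanical: substitute $|f'(u)|^{r}=s^{r}u^{r(s-1)}$ and $|f'(v)|^{r}=s^{r}v^{r(s-1)}$ into the right-hand side of (\ref{xx}), and the claimed bound drops out. The particular case $\lambda=\mu=1$ is immediate, because $2-\lambda-\mu=0$ annihilates the $A^{s}(u,v)$ term while $(\lambda u^{s}+\mu v^{s})/2=A(u^{s},v^{s})$, and the two prefactors $\left((1-\lambda)^{p+1}+\lambda^{p+1}\right)/(p+1)$ and $\left(\mu^{p+1}+(1-\mu)^{p+1}\right)/(p+1)$ both collapse to $1/(p+1)$. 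The only point requiring any care is the short $s$-convexity verification above; everything else is a direct substitution, so no genuine obstacle is expected.
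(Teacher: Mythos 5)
Your proposal is correct and follows exactly the paper's route: the paper's entire proof is the one-line remark that the claim follows from Theorem \ref{t0} applied to $f(x)=x^{s}$, and you carry out the same substitution while additionally supplying the verification (left implicit in the paper, via its Criterion) that $\left\vert f^{\prime }(x)\right\vert ^{r}=s^{r}x^{r(s-1)}$ is non-negative, convex, and hence $s$-convex in the second sense. The only caveat, which concerns the paper's statement rather than your argument, is that the literal substitution produces $s^{r}$ rather than $s$ inside the parentheses raised to the power $1/r$, i.e.\ an overall factor $s$ rather than $s^{1/r}$.
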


\begin{proof}
The claim follows from Theorem \ref{t0} applied to $s$-convex in the second
sense mapping $f\left( x\right) =x^{s}.$
\end{proof}

\begin{remark}
In Proposition \ref{p1}, if we take $\lambda =\mu =1/3,$ then Proposition %
\ref{p1} reduces to \cite[pp. 2198]{sari2}. Hence, the results in
Proposition \ref{p1} are generalizations of the results of Sarikaya et al.
in \cite{sari2}.
\end{remark}

\begin{proposition}
Let $s\in \left( 0,1\right] ,$ $u,v>0,$ $r\geq 1,$ $0\leq \lambda ,$ $\mu
\leq 1,$ then%
\begin{eqnarray*}
&&\left\vert \frac{\lambda u^{s}+\mu v^{s}}{2}+\frac{2-\lambda -\mu }{2}%
A^{s}\left( u,v\right) -L_{s}^{s}\left( u,v\right) \right\vert \\
&\leq &\frac{s\left( v-u\right) }{8}\left( \frac{1}{2^{s-1}\left( s+1\right)
\left( s+2\right) }\right) ^{1/r} \\
&&\times \left\{ \left( \left( 2\lambda ^{2}-2\lambda +1\right) \right)
^{1-1/r}\times \left[ u^{r\left( s-1\right) }E+v^{r\left( s-1\right) }L%
\right] ^{1/r}\right. \\
&&+\left. \left( \left( 2\mu ^{2}-2\mu +1\right) \right) ^{1-1/r}\times 
\left[ u^{r\left( s-1\right) }I+v^{r\left( s-1\right) }F\right]
^{1/r}\right\}
\end{eqnarray*}%
In particular, when $\lambda =\mu =1,$ we have%
\begin{eqnarray*}
&&\left\vert A\left( u^{s},v^{s}\right) -L_{s}^{s}\left( u,v\right)
\right\vert \\
&\leq &\frac{s\left( v-u\right) }{8}\left( \frac{1}{2^{s-1}\left( s+1\right)
\left( s+2\right) }\right) ^{1/r}\times \left\{ \left[ u^{r\left( s-1\right)
}\left( 1+s2^{s+1}\right) +v^{r\left( s-1\right) }\right] ^{1/r}\right. \\
&&+\left. \left[ u^{r\left( s-1\right) }+v^{r\left( s-1\right) }\left(
1+s2^{s+1}\right) \right] ^{1/r}\right\}
\end{eqnarray*}%
Moreover, when $r=1,$ we have%
\begin{equation*}
\left\vert A\left( u^{s},v^{s}\right) -L_{s}^{s}\left( u,v\right)
\right\vert \leq \frac{s^{2}\left( 1+s2^{s}\right) \left( v-u\right) }{%
2^{s}\left( s+1\right) \left( s+2\right) }\times A\left(
u^{s-1},v^{s-1}\right)
\end{equation*}
\end{proposition}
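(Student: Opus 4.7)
The plan is to specialize Theorem \ref{t1} to the power mapping $f(x)=x^{s}$ on the interval $[u,v] \subset (0,\infty)$. The first step is to verify the hypothesis: $|f'(x)|^{r} = s^{r} x^{r(s-1)}$ must be $s$-convex in the second sense on $[u,v]$. Since $s\in(0,1]$ and $r\ge 1$ give $r(s-1)\le 0$, the function $x \mapsto x^{r(s-1)}$ is convex and non-negative on $(0,\infty)$, and every non-negative convex function is $s$-convex in the second sense (because $\alpha\le\alpha^{s}$ and $1-\alpha\le(1-\alpha)^{s}$ on $[0,1]$). Hence Theorem \ref{t1} applies.

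Next I would evaluate the ingredients of (\ref{20}) for this $f$. One has $f(u)=u^{s}$, $f(v)=v^{s}$, $f\!\left(\tfrac{u+v}{2}\right) = A^{s}(u,v)$, and $\frac{1}{v-u}\int_{u}^{v}x^{s}\,dx = \frac{v^{s+1}-u^{s+1}}{(s+1)(v-u)} = L_{s}^{s}(u,v)$, so the left-hand side of (\ref{20}) is exactly the absolute value appearing in the statement. Also $|f'(u)|^{r} = s^{r} u^{r(s-1)}$ and $|f'(v)|^{r} = s^{r} v^{r(s-1)}$, so the common factor $s^{r}$ pulls out of each of the two $1/r$-th-root brackets as a single factor $s$ in front. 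This produces the first claimed inequality verbatim.

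The special case $\lambda=\mu=1$ reduces the left-hand side to $|A(u^{s},v^{s}) - L_{s}^{s}(u,v)|$: the weight $\tfrac{2-\lambda-\mu}{2}$ of the midpoint term vanishes and $\tfrac{\lambda u^{s}+\mu v^{s}}{2} = A(u^{s},v^{s})$. Direct substitution into the expressions for $E,L,I,F$ from Theorem \ref{t1} gives $E = F = 1 + s\cdot 2^{s+1}$ and $L = I = 1$, together with $2\lambda^{2}-2\lambda+1 = 2\mu^{2}-2\mu+1 = 1$, making the two $1/r$-th-root terms symmetric in $u$ and $v$. This yields the second displayed inequality.

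Finally, for $r=1$ the exponent $1-1/r$ is zero and the $1/r$-th roots collapse, so the two symmetric brackets simply add to $(2 + s\cdot 2^{s+1})(u^{s-1}+v^{s-1}) = 4(1+s\cdot 2^{s})\,A(u^{s-1},v^{s-1})$. Multiplying by the overall prefactor $\tfrac{s(v-u)}{8\cdot 2^{s-1}(s+1)(s+2)}$ and simplifying the powers of $2$ gives the announced bound. The only real obstacle is this final bookkeeping step, where the powers $2^{s-1}$, $2^{s}$, $2^{s+1}$ and the integer constants must be collapsed carefully to land on the stated form of the constant.
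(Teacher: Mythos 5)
Your proposal is correct and takes essentially the same route as the paper, whose entire proof is the one line ``apply Theorem \ref{t1} to $f(x)=x^{s}$''; your extra verification that $|f'(x)|^{r}=s^{r}x^{r(s-1)}$ is nonnegative and convex, hence $s$-convex in the second sense, usefully fills in a hypothesis check the paper leaves implicit, and your evaluations $E=F=1+s2^{s+1}$, $L=I=1$ at $\lambda=\mu=1$ are right. One caveat on the last step: carrying out the $r=1$ bookkeeping actually gives the constant $\frac{s\left(1+s2^{s}\right)}{2^{s}\left(s+1\right)\left(s+2\right)}$, with a single factor of $s$, so the $s^{2}$ in the paper's final display appears to be a typo rather than something your computation reproduces.
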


\begin{proof}
The claim follows from Theorem \ref{t1} applied to $s$-convex in the second
sense mapping $f\left( x\right) =x^{s}.$
\end{proof}

\begin{proposition}
\label{p3}\bigskip Let $s\in \left( 0,1\right] ,$ $u,v>0,$ $r\geq 1,$ $0\leq
\lambda ,$ $\mu \leq 1,$ then%
\begin{eqnarray}
&&  \label{31} \\
&&\left\vert \frac{\lambda a^{-s}+\mu v^{-s}}{2}+\frac{2-\lambda -\mu }{2}%
A^{-s}\left( u,v\right) -L_{-s}^{-s}\left( u,v\right) \right\vert  \notag \\
&\leq &\frac{s\left( v-u\right) }{8}\left( \frac{1}{2^{s-1}\left( s+1\right)
\left( s+2\right) }\right) ^{1/r}  \notag \\
&&\times \left\{ \left( \left( 2\lambda ^{2}-2\lambda +1\right) \right)
^{1-1/r}\times \left[ \left\vert u^{-r\left( s+1\right) }\right\vert
E+v^{-r\left( s+1\right) }L\right] ^{1/r}\right.  \notag \\
&&+\left. \left( \left( 2\mu ^{2}-2\mu +1\right) \right) ^{1-1/r}\times 
\left[ u^{-r\left( s+1\right) }I+v^{-r\left( s+1\right) }F\right]
^{1/r}\right\}  \notag
\end{eqnarray}%
In particular, when $\lambda =\mu =1/3,$ we have%
\begin{eqnarray*}
&&\left\vert \frac{1}{3}A\left( u^{-s},v^{-s}\right) +\frac{2}{3}%
A^{-s}\left( u,v\right) -L_{-s}^{-s}\left( u,v\right) \right\vert \leq \frac{%
s\left( v-u\right) }{8}\left( \frac{1}{2^{s-1}\left( s+1\right) \left(
s+2\right) }\right) ^{1/r}\left( \frac{5}{9}\right) ^{1-1/r} \\
&&\times \left\{ \left[ \left( \frac{2\times 5^{s+2}}{3^{s+2}}-\frac{%
2^{s+3}+2s-2^{s+1}s+7}{3}\right) \left\vert u^{-r\left( s+1\right)
}\right\vert +\left( \frac{2s+1}{3}+\frac{2}{3^{s+2}}\right) \left\vert
v^{-r\left( s+1\right) }\right\vert \right] ^{1/r}\right. \\
&&+\left. \left[ \left( \frac{2s+1}{3}+\frac{2}{3^{s+2}}\right) \left\vert
u^{-r\left( s+1\right) }\right\vert +\left( \frac{2\times 5^{s+2}}{3^{s+2}}-%
\frac{2^{s+3}+2s-2^{s+1}s+7}{3}\right) \left\vert v^{-r\left( s+1\right)
}\right\vert \right] ^{1/r}\right\}
\end{eqnarray*}%
Moreover, when $s=r=1,$ we obtain%
\begin{equation*}
\left\vert \frac{1}{3}A\left( u^{-1},v^{-1}\right) +\frac{2}{3}A^{-1}\left(
u,v\right) -L_{-1}^{-1}\left( u,v\right) \right\vert \leq \left( v-u\right) 
\frac{5}{36}A\left( u^{-2},v^{-2}\right) ,
\end{equation*}%
that it is the inequality in \cite[pp. 2199, line 2]{sari2}.
\end{proposition}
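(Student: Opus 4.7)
The plan is to apply Theorem~\ref{t1} to the function $f(x)=x^{-s}$ on $[u,v]$, with $u,v>0$, which is the natural negative-exponent analogue of the mapping $x^{s}$ used in Proposition~\ref{p1}. First I would compute $f'(x)=-sx^{-s-1}$, so that $|f'(x)|^{r}=s^{r}x^{-r(s+1)}$, and in particular $|f'(u)|^{r}=s^{r}u^{-r(s+1)}$ and $|f'(v)|^{r}=s^{r}v^{-r(s+1)}$. To legitimately invoke Theorem~\ref{t1} I must verify that $|f'|^{r}$ is $s$-convex in the second sense on $[u,v]$; by the Criterion stated above Proposition~\ref{p1}, it suffices to exhibit a non-negative convex function $h$ with $|f'(x)|^{r}=h(x)^{s}$. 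Taking $h(x)=s^{r/s}x^{-r(s+1)/s}$ works, since $h''(x)$ equals a strictly positive constant times $x^{-r(s+1)/s-2}>0$ for $x>0$.

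Next I would identify each piece appearing in Theorem~\ref{t1} with the corresponding mean: $f(u)=u^{-s}$, $f(v)=v^{-s}$, $f\!\left(\frac{u+v}{2}\right)=A^{-s}(u,v)$, and
\[
\frac{1}{v-u}\int_{u}^{v}x^{-s}\,dx=\frac{v^{1-s}-u^{1-s}}{(1-s)(v-u)}=L_{-s}^{-s}(u,v),
\]
with the usual limiting interpretation $L_{-1}(u,v)=L(u,v)=(v-u)/(\ln v-\ln u)$ when $s=1$. Plugging these into the bound of Theorem~\ref{t1} and factoring the common $s^{r}$ out of each of the two $[\,\cdot\,]^{1/r}$ brackets produces an overall factor of $s$ at the front, which is the origin of the leading constant $s(v-u)/8$ on the right-hand side of (\ref{31}).

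For the $\lambda=\mu=1/3$ specialisation I would just substitute $1/3$ into the expressions for $E$, $L$, $I$, $F$ from the statement of Theorem~\ref{t1}; the symmetry $\lambda=\mu$ then forces $I=L$ and $F=E$, while $2\lambda^{2}-2\lambda+1=5/9$ yields the prefactor $(5/9)^{1-1/r}$ and the explicit expressions $\frac{2\cdot 5^{s+2}}{3^{s+2}}-\frac{2^{s+3}+2s-2^{s+1}s+7}{3}$ and $\frac{2s+1}{3}+\frac{2}{3^{s+2}}$. For the final $s=r=1$ case, the power-mean exponents collapse, the prefactor $(5/9)^{1-1/r}$ becomes $1$, the sum $E+L$ simplifies to $10/3$, and a short rearrangement using $A(u^{-2},v^{-2})=(u^{-2}+v^{-2})/2$ yields the stated bound $(v-u)\,\frac{5}{36}\,A(u^{-2},v^{-2})$, reproducing \cite[p.~2199, line~2]{sari2}.

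The only genuine piece of work is the $s$-convexity verification in the first step; everything after that is bookkeeping inside the template of Theorem~\ref{t1}. The main caveat to watch is that the closed form for $L_{-s}^{-s}$ degenerates at $s=1$, so the $s=r=1$ specialisation must tacitly invoke the logarithmic-mean limit $L_{-1}=L$; with that convention the chain of substitutions is routine.
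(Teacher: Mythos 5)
Your proposal is correct and follows essentially the same route as the paper, whose entire proof is the one-line remark that the claim follows from Theorem~\ref{t1} applied to $f(x)=1/x^{s}$. You additionally supply the details the paper omits (the verification via the Criterion that $|f'|^{r}=s^{r}x^{-r(s+1)}$ is $s$-convex in the second sense, the identification of $L_{-s}^{-s}$ with the mean value of the integral, and the arithmetic $E+L=10/3$ behind the $s=r=1$ case), all of which check out.
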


\begin{proof}
The claim follows from Theorem \ref{t1} applied to $s$-convex in the second
sense mapping $f\left( x\right) =\frac{1}{x^{s}}.$
\end{proof}

\begin{remark}
If we take $\lambda =\mu =1/3$ in Proposition \ref{p3}, then inequality (\ref%
{31}) reduces to \cite[pp. 2199]{sari2}. Hence, the results in Proposition %
\ref{p3} are generalizations of the results of Sarikaya et al. in \cite%
{sari2}.
\end{remark}

\end{document}